\newtheorem{teor}{Theorem}
\newtheorem{lemma}{Lemma}
\DeclareMathOperator{\alt}{Alt}
\DeclareMathOperator{\Aut}{Aut}
\DeclareMathOperator{\soc}{soc}
\DeclareMathOperator{\frat}{Frat}
\DeclareMathOperator{\GL}{GL}
\DeclareMathOperator{\SL}{SL}
\DeclareMathOperator{\PSL}{PSL}
\newcommand{\qbin}[2]{\begin{bmatrix}{#1}\\ {#2}\end{bmatrix}_p}
\begin{document}
		\bibliographystyle{amsplain}
\title[The genus of the subgroup graph of a finite group]{The genus of the subgroup graph\\  of a finite group}
\date{}

\author{Andrea Lucchini}
\address{Dipartimento di Matematica \lq\lq Tullio Levi-Civita", Universit\`a di Padova, Via Trieste 63, 35131 Padova, Italy}
\email{lucchini@math.unipd.it}

\begin{abstract}
For a finite group $G$ denote  by $\gamma(L(G))$ the genus of the subgroup graph of $G.$  We prove that $\gamma(L(G))$ tends to infinity as either the rank of $G$ or the number of prime divisors of $|G|$ tends to infinity.
\end{abstract}

\maketitle

\section{Introduction}
The subgroup graph $L(G)$ of a finite group $G$ is the graph whose vertices are the subgroups
of the group and two vertices, $H_1$ and $H_2,$ are connected by an edge if and only if $H_1 \leq H_2$ and there is no subgroup $K$ such that $H_1\leq K \leq H_2.$  A graph is said to be embedded in a surface $S$ when it is drawn on $S$ so that no two edges intersect.

\

 The genus $\gamma(\Gamma)$ of a graph $\Gamma$ is the minimum $g$ such that there exists an embedding of $\Gamma$ into the orientable surface $S_g$ of genus $g$ (or in other words the minimun number $g$ of handles which must be added to a sphere so that $G$ can be embedded on the resulting surface).
 
\

In \cite{pla} the authors investigate the case $\gamma(L(G))=0$, characterizing the finite groups having a planar subgroup graph. It turns out that there are seven infinite families of such groups, and three additional isolated groups. All these groups have order divisible by at most three different primes and their Sylow subgroups have rank at most 2 (recall that the rank of a finite group $G$	 is the minimal number $r$ such that every subgroup of $G$ can be generated by $r$ elements). In this paper we generalize this result proving that for every non-negative integer $k,$ there exist two integers $a_k$ and $b_k$ such that if $\gamma(L(G))\leq k,$ then the order of $G$ is divisible by at most $a_k$ different primes and the rank of any Sylow $p$-subgroup of $G$ is at most $b_k.$ Let  $r_p(G)$ be the rank of a Sylow $p$-subgroup of $G,$
$r(G)$ the rank of $G$, $\pi(G)$ the set of the prime divisors of $|G|$ and $\rho(G)=\max_{p\in \pi(G)}r_p(G).$ By the main result in \cite{rg} and \cite{al}, if $r_p(G)\leq d$ for every $p\in \pi(G),$ then $r(G)\leq d+1$ and therefore $\rho(G)\leq r(G)\leq \rho(G)+1.$ Hence we may state our results as follows:

\begin{teor}\label{main}Let $G$ be a finite group. Then $\gamma(L(G))$ tends to infinity as either  the rank of $G$ or the number of prime divisors of $|G|$ tends to infinity.
\end{teor}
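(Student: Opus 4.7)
The first ingredient is that $\gamma(L(\cdot))$ is monotone on sections. For $H\le G$ and $H_1,H_2\le H$, any subgroup $K$ of $G$ with $H_1\le K\le H_2$ is automatically contained in $H$, so covering relations in $L(H)$ coincide with those in $L(G)$ and $L(H)$ is an induced subgraph of $L(G)$; by the Correspondence Theorem the same is true for $L(G/N)$ when $N\trianglelefteq G$. Hence $\gamma(L(G))\ge\gamma(L(H))$ for every section $H$ of $G$, and it is enough to exhibit, for each $k$, sections whose subgroup graph has genus larger than $k$. The two halves of Theorem~\ref{main} are independent and will be treated in turn.

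\emph{Rank direction.} Choose a prime $p$ with $r_p(G)=\rho(G)=n$, let $P$ be a Sylow $p$-subgroup, and set $V=P/\Phi(P)$. By Burnside's basis theorem $V$ is elementary abelian of rank $n$, so $\gamma(L(G))\ge\gamma(L(V))$ and it suffices to prove $\gamma(L(\mathbb{F}_p^n))\to\infty$. The subspaces of $\mathbb{F}_p^n$ are ranked by dimension, so $L(\mathbb{F}_p^n)$ is bipartite, with $v_n=\sum_k\qbin{n}{k}$ vertices and $e_n=\sum_k\qbin{n}{k}(p^{n-k}-1)/(p-1)$ edges. Euler's inequality for bipartite embeddings,
\[
\gamma(L(\mathbb{F}_p^n))\ \ge\ \frac{e_n-2v_n+4}{4},
\]
combined with a Gaussian binomial estimate (both sums are dominated by the middle term $k\approx n/2$, but the edge sum carries an extra factor $(p^{n-k}-1)/(p-1)\approx p^{n/2}$ there), gives $e_n/v_n\to\infty$ and hence the conclusion.

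\emph{Prime direction.} The prototype is the cyclic group $C$ of squarefree order $p_1\cdots p_k$, whose subgroup graph is the $k$-cube $Q_k$; the classical value $\gamma(Q_k)=(k-4)2^{k-3}+1$ for $k\ge 2$ grows without bound. The plan is to locate a topological copy of $Q_k$ inside $L(G)$. When $G$ is solvable, fix a Sylow system $\{P_1,\ldots,P_k\}$ (one $P_i$ per prime, with $P_iP_j=P_jP_i$ for all $i,j$); the $2^k$ Hall subgroups $H_S=\prod_{i\in S}P_i$ have pairwise distinct orders and satisfy $H_S\cap H_T=H_{S\cap T}$, so they form a Boolean sublattice of $L(G)$. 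Joining each pair of consecutive $H_S$'s by a saturated chain in $L(G)$ realises a subdivision of $Q_k$ as a subgraph of $L(G)$, whence $\gamma(L(G))\ge\gamma(Q_k)$.

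\emph{Main obstacle.} The hard case is the prime direction for non-solvable $G$, where Sylow systems need not exist. The strategy is to bootstrap from the rank direction: once $\gamma(L(G))$ is bounded, so is $\rho(G)$, and $G$ has bounded rank; one then hopes to use the structure theory of bounded-rank groups (possibly invoking CFSG to see that each non-abelian composition factor contributes only boundedly many primes not already present in the solvable radical) to produce a solvable section of $G$ carrying essentially all of $\pi(G)$, to which the Sylow system argument above applies. A subsidiary technical point, even in the solvable case, is verifying that the saturated chains interpolating between the $H_S$ can be chosen internally vertex-disjoint, so that the $Q_k$-subdivision is genuine and the genus bound really follows.
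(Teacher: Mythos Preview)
Your rank argument and your solvable-case prime argument match the paper's Lemma~\ref{rango} and Lemma~\ref{ciclo} respectively; the paper works throughout with an auxiliary graph $L_\Omega(G)$ (the Hasse diagram of the subposet on a chosen family $\Omega$) and simply asserts $\gamma(L_\Omega(G))\le\gamma(L(G))$, which absorbs your vertex-disjointness concern as a black box rather than constructing an explicit $Q_k$-subdivision.

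The real gap is the non-solvable prime direction, which you flag but do not close. Your proposed route---deduce bounded rank, then argue via CFSG that each non-abelian composition factor contributes only boundedly many primes beyond the solvable radical, then pass to a single solvable section carrying essentially all of $\pi(G)$---breaks down already for $G=\PSL(2,p)$: the rank is~$2$ for every $p$, the solvable radical is trivial, and no single solvable section captures all of $\pi(G)$. What the paper actually proves (Theorem~\ref{simple}) is stronger: bounded \emph{genus}, not bounded rank, forces the non-abelian simple sections of $G$ to lie in a fixed finite list, and the key ingredient is Lemma~\ref{pls}, showing $\gamma(L(\PSL(2,q)))\to\infty$ by analysing a well-chosen dihedral subgroup of $\PSL(2,q)$. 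With that in hand the paper does not look for one solvable section; it reduces to $\frat(G)=1$, writes $\soc(G)$ as a product of minimal normal subgroups, and bounds the primes contributed by each piece separately---finitely many from the non-abelian factors and their automorphism groups, boundedly many from the rank-$\le 2$ abelian factors and their centraliser quotients via Lemma~\ref{ciclo} and an auxiliary analysis of subgroups of $\GL(2,p)$ (Lemma~\ref{beta}), and only boundedly many rank-$\ge 3$ abelian factors by Lemma~\ref{rango}. Both the $\PSL(2,q)$ lemma and this socle decomposition are missing from your outline.
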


Our proof uses the classification of the finite non-abelian simple groups. In particular we prove the following result, of independent interest.

\begin{teor}\label{simple} For every $k\in \mathbb N,$ there exist only finitely many non-abelian finite simple groups $S$, with $\gamma(L(S))\leq k.$
\end{teor}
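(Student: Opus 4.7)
The plan is to invoke the Classification of Finite Simple Groups and treat each infinite family separately. The key technical tool is a $p$-subgroup inheritance lemma: for any prime $p$ and any $p$-subgroup $P\le G$, the Hasse diagram $L(P)$ is a subgraph of $L(G)$. Indeed, every cover $H_1\lessdot H_2$ in $L(P)$ has prime index $[H_2:H_1]=p$, and any intermediate $K$ of $G$ with $H_1<K<H_2$ would give $p=[H_2:K]\cdot[K:H_1]$ by Lagrange, which is impossible. Combining this with the identification of the interval $[\Phi(P),P]$ with $L(P/\Phi(P))=L(C_p^{d(P)})$ yields, for every prime $p$,
\[
\gamma(L(S))\;\ge\;\gamma\bigl(L(C_p^{d})\bigr)\quad\text{where }d=d(\operatorname{Syl}_p(S)).
\]
A direct Euler-formula count on the subspace lattice of $C_p^d$, using the standard Gaussian binomial enumerations, shows $\gamma(L(C_p^d))\to\infty$ whenever $d\ge 3$ and either $d\to\infty$ or $p\to\infty$.

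This immediately takes care of most of the CFSG list. The sporadic groups form a finite set. For alternating groups, the Sylow $2$-subgroup of $A_n$ has $d\to\infty$ by its iterated wreath-product structure. For a simple group of Lie type $S(q)$ in characteristic $p$ with $q=p^f$, the unipotent radical of a Borel is a $p$-group whose minimal number of generators equals $r\cdot f$, where $r$ is the Lie rank; so the inequality above handles every family in which $rf\ge 3$ and either $rf\to\infty$ or $p\to\infty$. What remains is a finite list of small-rank Lie types over small fields --- essentially the rank-$1$ or rank-$2$ untwisted and twisted groups over a prime or quadratic field --- each considered as $q\to\infty$.

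For this remaining list the plan is to bound the genus directly from Euler's inequality $\gamma(\Gamma)\ge\tfrac16(|E(\Gamma)|-3|V(\Gamma)|+6)$, by counting subgroups and cover pairs in $L(S(q))$. For each such type, conjugacy-class formulas give explicit polynomial expressions in $q$ for the number of subgroups in each conjugacy class, and the plan is to exhibit a concrete family of cover pairs --- for instance, an involution inside a dihedral subgroup, or a subgroup of prime order inside the normaliser of a maximal torus --- whose total count grows in $q$ strictly faster than $|V(L(S(q)))|$. The $\PSL_2(q)$ prototype is illustrative: the $\Theta(q^2)$ involutions each lie under $\Theta(q)$ maximal dihedral overgroups of order $q\pm 1$, producing $\Theta(q^3)$ cover pairs against $O(q^{2+\varepsilon})$ vertices, forcing $\gamma(L(\PSL_2(q)))\to\infty$. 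The main obstacle is carrying out this incidence count uniformly across every remaining type; it seems to require a case-by-case inspection of their maximal-subgroup structure, via Aschbacher's theorem for the small classical groups and the Liebeck--Seitz classification for the exceptional ones.
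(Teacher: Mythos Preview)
Your overall architecture matches the paper's: invoke CFSG, dispose of alternating groups and of large Lie rank (or large $f$) via an elementary-abelian section $C_p^{\,d}$ in a Sylow $p$-subgroup, and then treat a residual list of small-rank Lie-type groups as the field grows. The divergence, and the gap, is in this last step.

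Your $\PSL_2(q)$ prototype does not work as written. The pairs $(\langle t\rangle,\,D_{q\pm 1})$ you describe are not cover relations in $L(\PSL_2(q))$: a full chain of intermediate dihedral subgroups sits between them. More seriously, the vertex bound $|V(L(\PSL_2(q)))|=O(q^{2+\varepsilon})$ is false, since for every odd $q\ge 5$ there are on the order of $|G|/24\asymp q^{3}$ conjugates of $A_4$ (and, when present, of $S_4$ and $A_5$) inside $\PSL_2(q)$. So the raw Euler inequality on the full Hasse diagram does not separate edges from vertices in the way you claim, and the case-by-case programme you propose for the remaining rank-$1$ and rank-$2$ types would face the same difficulty.

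The paper sidesteps both problems. First, rather than estimating the whole lattice it works with $L_\Omega(G)$ for a hand-picked family $\Omega$, using $\gamma(L(G))\ge\gamma(L_\Omega(G))$ together with the triangle-free Euler bound; for $\PSL_2(q)$ with $q$ odd it takes $\Omega$ inside a \emph{single} maximal dihedral $D_n$ (choosing $n\in\{(q\pm1)/2\}$ with two coprime proper divisors $a,b$) and counts two layers of sub-dihedrals against the non-central involutions, obtaining $\gamma\ge n/24$. Second --- and this is what dissolves your ``main obstacle'' --- every finite simple group of Lie type over $\mathbb F_q$, with the exception of the Suzuki groups, has a section isomorphic to $\PSL(2,q)$; since the lattice of a section is an interval in, and hence a genuine subgraph of, $L(S)$, the single $\PSL_2$ lemma bounds $q$ for all types at once. (Suzuki groups need no separate treatment: there $q=2^f$ with odd $f\ge 3$, so they are already caught by the $C_2^{\,f}$ section of the Sylow $2$-subgroup.) No inspection of Aschbacher classes or Liebeck--Seitz lists is needed.
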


Notice that the previous theorem cannot be deduced from Theorem \ref{main}. For example $\rho(\PSL(2,p))=2$ for every prime $p$ and it follows from \cite[Corollary 4.2]{tur} that there are infinitely many primes $p$ such that $|\PSL(2,p)|$ is divisible by at most 20 primes.

\section{Proofs of our results}
If $\Omega$ is a family of subgroups of $G,$ we may consider the graph $L_\Omega(G)$ whose vertices are the subgroups in $\Omega$ and two vertices, $H_1$ and $H_2,$ are connected by an edge if and only if $H_1 \leq H_2$ and there is no subgroup $K\in \Omega$ such that $H_1\leq K \leq H_2.$ Clearly  $\gamma(L_\Omega(G))\leq \gamma(L(G))$ for every choice of $\Omega.$
Moreover, for any choice of $\Omega$, the graph $\gamma(L_\Omega(G))$ is triangle-free.
An easy consequence of Euler's formula (see for example \cite[Corollary 11.17(b)]{hara}) is that if $\Gamma$ is triangle-free then 
$$\gamma(\Gamma)\geq \frac{|E(\Gamma)|}{4}-\frac{|V(\Gamma)|}{2}+1.$$
So we have:
\begin{lemma}\label{ineq}
Let $G$ be a finite group and let $\Omega$ be a family of subgroups of $G.$ Then
$$\gamma(L(G))\geq \gamma(L_\Omega(G))\geq\frac{|E(L_\Omega(G))|}{4}-\frac{|V(L_\Omega(G))|}{2}+1.$$
\end{lemma}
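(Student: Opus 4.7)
The plan is to package the three observations made in the paragraph preceding the statement into a single chain of inequalities, since the lemma is essentially a summary of what has just been asserted. First I would establish the monotonicity $\gamma(L_\Omega(G))\leq\gamma(L(G))$. The idea is that $L_\Omega(G)$ is a topological minor of $L(G)$: every edge $(H_1,H_2)$ of $L_\Omega(G)$ is by definition realised by some saturated chain $H_1=K_0<K_1<\cdots<K_n=H_2$ in the full subgroup lattice (i.e.\ a path in $L(G)$). Starting from an embedding of $L(G)$ on the surface $S_g$ of minimum genus $g=\gamma(L(G))$, one selects such a chain for every $L_\Omega(G)$-edge, draws the $L_\Omega(G)$-edge along the image of the corresponding path, and then suppresses the internal (non-$\Omega$) vertices. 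Since genus is monotone under topological minors, the result is an embedding of $L_\Omega(G)$ on $S_g$, giving the left-hand inequality.

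Next I would verify the triangle-freeness of $L_\Omega(G)$, which is what lets me apply the Euler-type bound. Suppose $H_1,H_2,H_3$ formed a $3$-cycle in $L_\Omega(G)$. Each edge forces a comparability in the subgroup lattice of $G$, so by antisymmetry of $\leq$ the three subgroups must lie in a chain; up to relabelling, $H_1<H_2<H_3$. But then $H_2\in\Omega$ is strictly intermediate between $H_1$ and $H_3$, contradicting the existence of an edge between them in $L_\Omega(G)$.

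With triangle-freeness in hand I would invoke the cited consequence of Euler's formula for triangle-free graphs embedded on an orientable surface, namely $\gamma(\Gamma)\geq |E(\Gamma)|/4-|V(\Gamma)|/2+1$, applied to $\Gamma=L_\Omega(G)$. Concatenating this with the monotonicity inequality yields the displayed bound. The only delicate point — and essentially the only one not already executed in the preceding paragraph — is the topological-minor step: one must make sure that contracting the embedded chains through non-$\Omega$ vertices does not introduce edge crossings. Since this is precisely the standard fact that genus is monotone under topological minors of a graph embedded on $S_g$, I expect it to be the principal technical obstacle only in so far as it requires choosing the appropriate formalisation (topological minor, or explicit arc-replacement in the embedding), and this is the step the author dispatches with the word \emph{clearly}.
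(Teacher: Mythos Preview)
Your proposal follows the paper exactly: the lemma packages the three observations of the preceding paragraph (the inequality $\gamma(L_\Omega(G))\le\gamma(L(G))$, triangle-freeness of $L_\Omega(G)$, and the Euler bound for triangle-free graphs), and you supply the same three ingredients. Your triangle-freeness argument is correct and complete, and the appeal to the Euler inequality is identical to the paper's.

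There is, however, a genuine gap in your justification of the first inequality. You realise each $L_\Omega(G)$-edge $(H_1,H_2)$ by a saturated chain in $L(G)$ and then appeal to monotonicity of genus under topological minors. But for $L_\Omega(G)$ to be a topological minor of $L(G)$ via these chains, the chains must be internally vertex-disjoint, and nothing you say ensures this. Distinct $\Omega$-covering pairs may have intervals whose every maximal chain is forced through a common non-$\Omega$ subgroup $K$; once several chains share the same embedded vertex (or, worse, a common edge of $L(G)$), drawing the corresponding $L_\Omega$-edges along them produces overlapping arcs or unavoidable crossings. That this is not a mere formality is shown by the failure of the analogous statement for arbitrary finite posets: the Hasse diagram of the poset on $\{a_1,\dots,a_n,c,b_1,\dots,b_n\}$ with covering relations $a_i<c$ and $c<b_j$ is the planar star $K_{1,2n}$, yet the covering graph of the subposet on $\{a_i,b_j\}$ is $K_{n,n}$, of genus $\lceil(n-2)^2/4\rceil$. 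Since your argument uses nothing about subgroup lattices beyond their being posets, it cannot go through as written. The paper itself offers no more than the word ``clearly'' here, so whatever the intended justification is, it is not the routine topological-minor step you invoke.
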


\begin{lemma}\label{ciclo}
	Let $G$ be a finite soluble group and let $t$ be the number of the distinct prime divisors of $|G|$. Then $$\lim_{t\to \infty}\gamma(L(G))=\infty.$$
\end{lemma}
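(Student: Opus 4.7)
The plan is to exploit the abundance of Hall subgroups available in a finite soluble group, and to feed the resulting ``boolean-lattice'' subfamily into Lemma~\ref{ineq}.

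Let $\pi(G)=\{p_1,\dots,p_t\}$. Since $G$ is soluble, by P.~Hall's theorem $G$ admits a Sylow system: pairwise permutable Sylow subgroups $P_1,\dots,P_t$ with $P_i$ a Sylow $p_i$-subgroup. For each subset $I\subseteq\{1,\dots,t\}$ set $H_I=\prod_{i\in I}P_i$; this is a Hall $\pi_I$-subgroup of $G$, where $\pi_I=\{p_i:i\in I\}$. The $2^t$ subgroups $H_I$ are pairwise distinct (their orders are distinct), and $H_I\leq H_J$ if and only if $I\subseteq J$. Take $\Omega=\{H_I:I\subseteq\{1,\dots,t\}\}$.

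Next I would identify $L_\Omega(G)$ with the Hasse diagram of the boolean lattice, i.e.\ the hypercube graph $Q_t$. Indeed, suppose $H_I<H_L<H_J$ with $H_L\in\Omega$; then $I\subsetneq L\subsetneq J$, so a strict intermediate member of $\Omega$ exists as soon as $|J\setminus I|\geq 2$, while for $|J\setminus I|=1$ the containment $I\subseteq L\subseteq J$ forces $L\in\{I,J\}$. Hence edges of $L_\Omega(G)$ correspond exactly to pairs $I\subset J$ with $|J\setminus I|=1$. Thus
\[
|V(L_\Omega(G))|=2^t,\qquad |E(L_\Omega(G))|=t\cdot 2^{t-1}.
\]

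Finally, Lemma~\ref{ineq} gives
\[
\gamma(L(G))\geq \frac{t\cdot 2^{t-1}}{4}-\frac{2^t}{2}+1=(t-4)\,2^{t-3}+1,
\]
which tends to infinity with $t$, proving the lemma. There is essentially no hard step: the only thing to be careful about is verifying that no member of $\Omega$ lies strictly between $H_I$ and $H_I\cup\{j\}$, which is immediate from the bijection $I\leftrightarrow H_I$; the solubility hypothesis is used in exactly one place, to guarantee the existence of the Sylow system that yields the full boolean family of Hall subgroups.
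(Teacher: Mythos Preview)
Your proof is correct and follows essentially the same approach as the paper's: both take a Sylow basis, form the Boolean family $\Omega=\{H_I\}$ of Hall subgroups, identify $L_\Omega(G)$ with the hypercube $Q_t$, and apply Lemma~\ref{ineq}. Your closed-form edge count $|E|=t\cdot 2^{t-1}$ and the resulting bound $(t-4)\,2^{t-3}+1$ are slightly tidier than the paper's version, which writes the same edge count as $\sum_{i}\binom{t}{i}(t-i)$ and then estimates it to obtain the weaker bound $2^{t-1}(t/8-1)+1$.
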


\begin{proof}
	Suppose that $p_1,\dots,p_t$ are the distinct prime divisors of the order of $G$ and let $P_1,\dots,P_t$ be a Sylow basis of $G.$ To any subset $J$ of $\{p_1,\dots,p_t\},$ there corresponds the  subgroup $H_J=\prod_{j\in J}P_j$ of $G.$ Let $\Omega=\{H_J\mid J \subseteq \{1,\dots,t\}\}$. The vertices of $L_\Omega(G)$ correspond to the subsets of $\{p_1,\dots,p_t\},$ so $|V(L_\Omega(G)|=2^t.$ If $J_1,J_2\subseteq \{p_1,\dots,p_t\}$ and
	$|J_1|\leq |J_2|,$ then $H_{J_1}$ and $H_{J_2}$ are adjacent if and only if $J_2=J_1\cup \{p\}$ for a prime $p\notin J_1,$ so 
	$$|E(L_\Omega(G))|=\sum_{0\leq i\leq t-1}\binom t i (t-i).$$
	 It follows from Lemma \ref{ineq} that
	$$\begin{aligned}\gamma(L(G))&\geq\sum_{0\leq i\leq t-1}\binom t i \frac{(t-i)}{4}-\frac{2^t}{2}+1\geq \sum_{0\leq i\leq \frac{\lfloor t-1\rfloor}2 }\binom t i \frac{t}{8}-\frac{2^t}{2}+1\\&\geq \frac{2^t\cdot t}{16}-\frac{2^t}{2}+1=2^{t-1}\left(\frac{t}{8}-1\right)+1.\qedhere
	\end{aligned}
	$$
\end{proof}

Notice that the assumption $t\geq 3$ in the statement of the previous lemma is necessary. Indeed $L(A_{p,2})\cong K_{2,p+1}$ is a planar graph for any choice of $p.$

\begin{lemma}\label{rango} Let $p$ be a prime, $t$ a positive integer and $A_{p,t}$ the elementary abelian $p$-group of rank $t.$ If $t\geq 3,$ then $\gamma(L(A_{p,t}))$ tends to infinity as $p^t$ tends to infinity.
\end{lemma}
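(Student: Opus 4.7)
The plan is to apply Lemma~\ref{ineq} with $\Omega$ chosen to be the family of subgroups of $A_{p,t}$ of order $p$ or $p^{2}$. Under the identification $A_{p,t}\cong\mathbb{F}_{p}^{t}$, these are the $1$- and $2$-dimensional subspaces, and since no member of $\Omega$ lies strictly between a line and a plane, $L_{\Omega}(A_{p,t})$ is the bipartite incidence graph of lines and planes. A direct count with Gaussian binomial coefficients gives
$$|V(L_{\Omega})|=\qbin{t}{1}+\qbin{t}{2},\qquad |E(L_{\Omega})|=(p+1)\qbin{t}{2},$$
the second identity using the fact that every $2$-dimensional subspace of $\mathbb{F}_{p}^{t}$ contains exactly $p+1$ lines.

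Substituting these counts into Lemma~\ref{ineq} and using the identity $\qbin{t}{2}=\qbin{t}{1}\cdot(p^{t-1}-1)/(p^{2}-1)$, one obtains after elementary algebra
$$\gamma(L(A_{p,t}))\;\geq\;\qbin{t}{1}\cdot\frac{p^{t-1}-2p-3}{4(p+1)}\;+\;1.$$
Since $t\geq 3$ gives $p^{t-1}\geq p^{2}$, and since $p^{2}-2p-3=(p-3)(p+1)$, the second factor is non-negative for every $p\geq 3$, and for $p=2$ as soon as $t\geq 4$.

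To conclude it suffices to check that this right-hand side tends to infinity whenever $p^{t}\to\infty$ with $t\geq 3$. If $p$ stays bounded then necessarily $t\to\infty$, whence $\qbin{t}{1}\to\infty$ and $p^{t-1}/(p+1)\to\infty$; if instead $p\to\infty$ with $t\geq 3$, then $\qbin{t}{1}\geq p^{t-1}\geq p^{2}$, while the second factor satisfies $(p^{t-1}-2p-3)/(4(p+1))\geq (p-3)/4\to\infty$. In either regime the product grows without bound. The main obstacle is precisely this need to separate the two limiting regimes (bounded $p$ with $t\to\infty$ versus bounded $t$ with $p\to\infty$) and to absorb the finitely many small pairs $(p,t)$, such as $(2,3)$ and $(3,3)$, for which the inner factor $p^{t-1}-2p-3$ is non-positive; as these amount to only finitely many groups, they contribute at most a finite correction and do not affect the asymptotic claim.
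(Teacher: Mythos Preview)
Your proof is correct and follows essentially the same route as the paper: you choose the same family $\Omega$ of subgroups of order $p$ and $p^{2}$, obtain the same vertex and edge counts, and plug them into Lemma~\ref{ineq}. The only cosmetic difference is that the paper factors out $\qbin{t}{2}$ to reach the equivalent bound $\gamma(L(A_{p,t}))\geq \frac{1}{4}\qbin{t}{2}\bigl(p-1-\tfrac{2(p^{2}-1)}{p^{t-1}-1}\bigr)$, whereas you factor out $\qbin{t}{1}$; your subsequent case analysis of the two limiting regimes is more explicit than the paper's one-line conclusion, but the argument is the same.
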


\begin{proof} Let $\Omega$ be the family of the subgroups of $A_{p,t}$ of order $p$ and $p^2$
	and let $\Gamma=\Gamma_\Omega(A_{p,t}).$
	Notice that $\Gamma$ is a bipartite graph and
	$$|V(\Gamma)|=\qbin{t}{2}+\qbin{t}{1},\quad  |E(\Gamma)|=\qbin{t}{2}\frac{p^2-1}{p-1}.$$
	Since $$\qbin{t}{1}=\qbin{t}{2}\frac{p^2-1}{p^{t-1}-1},$$
	we deduce
	$$\begin{aligned}\gamma(L(G))&\geq \gamma(\Gamma)\geq \frac{|E(\Gamma)|}{4}-\frac{|V(\Gamma)|}{2}\\&=
	\frac{1}{4}
	\qbin{t}{2}
	\left(
	\frac{p^2-1}{p-1}-
	2\left(1+\frac{p^2-1}{p^{t-1}-1}\right)
	\right)\\
	&= 	\frac{1}{4}
	\qbin{t}{2}
	\left(p-1-\frac{2(p^2-1)}{p^{t-1}-1}\right).
	\end{aligned}
	$$
	In particular, if $t\geq 3,$ then
 $\gamma(L(A_{p,t}))$ tends to infinity as $p^t$ tends to infinity.
\end{proof}

\begin{lemma}\label{pls}  $\gamma(L(\PSL(2,q)))$ tends to infinity as $q$ tends to infinity.
\end{lemma}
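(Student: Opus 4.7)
My plan is to apply Lemma \ref{ineq} to a family $\Omega$ of subgroups chosen differently in the two characteristic cases. When $q = 2^f$ with $f \geq 3$, I would take a Sylow $2$-subgroup $U$ of $\PSL(2,q) = \SL(2,q)$ (for instance the subgroup of upper unitriangular matrices), which is elementary abelian of rank $f$, hence isomorphic to $A_{2,f}$. Letting $\Omega$ be the set of subgroups of $U$ of order $2$ or $4$, viewed as subgroups of $\PSL(2,q)$, a Lagrange argument shows that every cover of $L_\Omega(U)$ is still a cover of $L_\Omega(\PSL(2,q))$: any subgroup of $\PSL(2,q)$ of order $2$ or $4$ squeezed between an order-$2$ subgroup $A \leq U$ and an order-$4$ subgroup $B \leq U$ must lie in $B$, so it equals $A$ or $B$. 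Hence the computation in Lemma \ref{rango} transfers verbatim and forces $\gamma(L(\PSL(2,q))) \to \infty$ along $q = 2^f \to \infty$, with the finitely many values $q \in \{2,4\}$ irrelevant.

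For $q$ odd, the Sylow $p$-subgroup has rank only $f \leq 2$ in the stubborn case $q \in \{p,p^2\}$, so I would instead exploit the involutions together with their dihedral envelopes. Put $\Omega = \Omega_1 \cup \Omega_2 \cup \Omega_3$, where $\Omega_1$ is the set of subgroups of $\PSL(2,q)$ of order $2$, and $\Omega_2$, $\Omega_3$ are the sets of conjugates of the normalizers $D_{q-1}$, $D_{q+1}$ of a split, respectively non-split, maximal torus. For $q$ large enough, Dickson's classification gives that each member of $\Omega_2 \cup \Omega_3$ is maximal in $\PSL(2,q)$, so two distinct members of $\Omega_2 \cup \Omega_3$ are incomparable; combined with the obvious fact that no order-$2$ subgroup properly contains another, this guarantees that every inclusion $\langle\tau\rangle < D$ with $\tau$ an involution of some $D \in \Omega_2 \cup \Omega_3$ is an edge of $L_\Omega(\PSL(2,q))$. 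Routine counts give each $|\Omega_i|$ of order $q^2$, and each $D \in \Omega_2 \cup \Omega_3$ contains of order $q$ involutions, so $|V(L_\Omega)|$ is of order $q^2$ while $|E(L_\Omega)|$ is of order $q^3$. Lemma \ref{ineq} then yields $\gamma(L(\PSL(2,q)))$ of order at least $q^3$, and this tends to infinity.

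The main obstacle, I expect, is the careful Dickson bookkeeping needed in the odd case: verifying maximality of $D_{q\pm 1}$ (so that pairs of maximal dihedrals are indeed incomparable), giving the precise counts of conjugates and of involutions they contain (the numerical constants depending on $q \bmod 4$), and isolating the finite list of small $q$ (essentially $q \leq 11$) for which maximality or self-normalization fails and which may be treated as exceptions.
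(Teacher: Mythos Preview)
Your proposal is correct. The even case is essentially the paper's argument: the paper simply invokes $\gamma(L(\PSL(2,2^t)))\geq \gamma(L(A_{2,t}))$ (taking $\Omega$ to be all subgroups of a Sylow $2$-subgroup) and then Lemma~\ref{rango}, which is a slightly cleaner packaging of what you wrote.

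The odd case, however, is a genuinely different route. The paper works entirely inside a \emph{single} dihedral subgroup $M\cong D_n$ with $n\in\{(q-1)/2,(q+1)/2\}$: it invokes a number-theoretic result of Herzog to guarantee (for $q\notin\{5,7,9,17\}$) that $n$ has two coprime proper divisors $a,b$, then takes $\Omega=\{1,M\}$ together with the sub-dihedrals $D_a,D_b$ and the non-central involutions of $M$. A short count gives $\gamma(L(\PSL(2,q)))\geq n/24$, a linear bound in $q$. Your approach instead globalises: you take \emph{all} involutions together with \emph{all} conjugates of both maximal dihedral types, use Dickson's list to ensure incomparability among the dihedrals, and obtain a cubic-in-$q$ lower bound. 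The trade-off is that the paper avoids the Dickson bookkeeping (the $\Omega$ sits inside one subgroup, so maximality in $G$ is never really used) at the price of the arithmetic input on prime factorisations of $(q\pm1)/2$; your argument dispenses with that arithmetic and yields a much stronger bound, at the cost of the conjugacy and self-normalisation checks you flagged.
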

\begin{proof}
First assume $q=2^t.$ In this case a Sylow 2-subgroup of $\PSL(2,q)$ is isomorphic to $A_{2,t}$, hence
$\gamma(L(\PSL(2,2^t))\geq \gamma(L(A_{2,t}))$ and the conclusion follows from Lemma \ref{rango}.
Now assume that $q$ is odd. If $q\notin \{5,7,9,17\}$, then there exists $n\in \{\frac{q-1}2,\frac{q+1}2\}$ such that $n$ is divisible by at least two different primes (see for example \cite[Theorem 3]{mh}). We factorize $n=a\cdot b$ where $a$ and $b$ are coprime integers properly dividing $n.$ The group $\PSL(2,q)$ has a maximal subgroup $M$ isomorphic to the dihedral group $D_n$ of order $2n,$ and inside $M$ we can find $n/a$ subgroups $H_1,\dots,H_{n/a}$ isomorphic to $D_a$, $n/b$ subgroups $K_1,\dots,K_{n/b}$ isomorphic to $D_b$ and $n$ subgroups $J_1,\dots,J_n$ that have order 2 and are non-central in $M.$ Let $\Omega:=\{M, H_i, K_j, J_k, 1\mid 1\leq i\leq \frac n a, 1\leq j\leq \frac n b, 1\leq k\leq n\}.$ We have
$$\begin{aligned}v&=|V(\Gamma_\Omega(\PSL(2,q))|=2+n+\frac n a+\frac n b,\\
e&=|E(\Gamma_\Omega(\PSL(2,q))|=3n+\frac n a+\frac n b,
\end{aligned}$$
since $M$ is adjacent to  $H_i$ and $K_j$ for $1\leq i\leq n/a$ and $1\leq j\leq n/b,$ 1 is adjacent to  $J_k$ for $1\leq k\leq n,$
any $H_i$ contains precisely $a$ non-central subgroups of order 2 and any $K_j$ contains precisely $b$ non-central subgroups of order 2. 
It follows
$$\begin{aligned}\gamma(L(\PSL(2,q)))&\geq \gamma(L_\Omega(\PSL(2,q)))\geq \frac e 4 - \frac v 2 +1\\&=\frac{1}{4}\left(3n+\frac{n}{a}+\frac{n}{b}\right)-\frac{1}{2}\left(2+n+\frac{n}{a}+\frac{n}{b}\right)+1\\&=\frac{n}{4}\left(1-\frac{1}{a}-\frac{1}{b}\right)\geq \frac{n}{4}\left(1-\frac{1}{2}-\frac{1}{3}\right)\geq \frac{n}{24}.
\end{aligned}
$$
The conclusion follows from the observation that $n$ tends to infinity as $q$ tends to infinity.
\end{proof}

\begin{lemma}\label{beta}
If $G\leq \GL(2,p)$, where $p$ is a prime, and $\gamma(L(G))\leq k,$ then the number of the prime divisors of $|G|$ is at most $\beta_k,$ where $\beta_k$ is a positive integer depending only on $k.$
\end{lemma}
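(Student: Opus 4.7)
My plan is to split $G$ along its perfect residual $G^{(\infty)}=\bigcap_n G^{(n)}$ and fall back onto Lemmas~\ref{ciclo} and~\ref{pls}. The workhorse observation is that $\gamma$ only decreases when one passes to a subgroup or to a quotient: for any $H\leq G$ the graph $L(H)$ is the induced subgraph of $L(G)$ on subgroups of $H$, and for any $N\trianglelefteq G$ the graph $L(G/N)$ is the induced subgraph of $L(G)$ on subgroups containing $N$. Hence any genus bound on such a smaller lattice carries back to $L(G)$.

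If $G$ is soluble, Lemma~\ref{ciclo} already bounds $|\pi(G)|$ in terms of $k$ and we are done. Assume $G$ is non-soluble. Because $G^{(\infty)}$ is perfect, the determinant kills it and $G^{(\infty)}\leq \SL(2,p)$. Dickson's classification shows that the perfect subgroups of $\PSL(2,p)$ for $p\geq 5$ are contained in $\{1,A_5,\PSL(2,p)\}$; lifting through $\{\pm 1\}$, the perfect subgroups of $\SL(2,p)$ lie in $\{1,A_5,2.A_5,\SL(2,p)\}$. Thus $G^{(\infty)}\in\{A_5,2.A_5,\SL(2,p)\}$. (For $p\in\{2,3\}$, $\PSL(2,p)$ is itself soluble, forcing $G$ soluble, so we are already in the previous case.)

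If $G^{(\infty)}=\SL(2,p)$, then $\SL(2,p)\leq G$ and the subgroups of $\SL(2,p)$ containing $\{\pm 1\}$ induce a subgraph of $L(G)$ isomorphic to $L(\PSL(2,p))$; Lemma~\ref{pls} then bounds $p$, and hence $|\pi(G)|\leq |\pi(\GL(2,p))|$, in terms of $k$. Otherwise $G^{(\infty)}$ has order dividing $120$, so $\pi(G^{(\infty)})\subseteq\{2,3,5\}$; meanwhile the quotient $G/G^{(\infty)}$ is soluble by the definition of the residual and appears as an induced subgraph of $L(G)$, so Lemma~\ref{ciclo} bounds $|\pi(G/G^{(\infty)})|$ in terms of $k$. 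Summing the two contributions bounds $|\pi(G)|$ in terms of $k$ only, and the maximum over the three cases yields a suitable $\beta_k$.

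The step I expect to be the most delicate is extracting the short list $\{1,A_5,2.A_5,\SL(2,p)\}$ of perfect subgroups of $\SL(2,p)$ from Dickson's theorem and treating cleanly the sporadic isomorphisms $A_5\cong\PSL(2,4)\cong\PSL(2,5)$ together with the small primes $p\in\{2,3\}$; once these are in hand, the conclusion follows mechanically from the two earlier lemmas via the induced-subgraph bounds on the genus.
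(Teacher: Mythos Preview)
Your argument is correct. The case split and the handling of the $A_5$ case are essentially the same as in the paper: there too one separates the soluble case (Lemma~\ref{ciclo}) from the non-soluble case, and in the latter observes that the non-soluble ``core'' of $G$ is either $2.A_5$ or $\SL(2,p)$, so that when it is $2.A_5$ the quotient is soluble and Lemma~\ref{ciclo} plus $\pi(2.A_5)=\{2,3,5\}$ finishes. The paper quotes Huppert's classification of subgroups of $\GL(2,p)$ to get this dichotomy, whereas you extract it from Dickson via the perfect residual; these are equivalent routes to the same two cases.

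Where your argument genuinely diverges is in the case $G^{(\infty)}=\SL(2,p)$. You invoke Lemma~\ref{pls} to bound $p$ itself (and hence $|\pi(\GL(2,p))|$). The paper instead avoids Lemma~\ref{pls} entirely: it observes that $\pi(G)\subseteq\pi(\GL(2,p))=\pi(H_1)\cup\pi(H_2)$, where $H_1,H_2\leq\PSL(2,p)$ are \emph{soluble} subgroups of orders $p+1$ and $p(p-1)/2$, and then applies Lemma~\ref{ciclo} to each $H_i$ to get $|\pi(G)|\leq 2\alpha_k$. So the paper's proof of Lemma~\ref{beta} rests on Lemma~\ref{ciclo} alone, while yours imports the extra ingredient Lemma~\ref{pls}; in exchange your version gives the stronger conclusion that $p$ is bounded, not merely $|\pi(G)|$. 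A minor point: in your list $\{1,A_5,2.A_5,\SL(2,p)\}$ the entry $A_5$ never actually occurs for odd $p$ (the only involution in $\SL(2,p)$ is $-I$), but since you only use $\pi\subseteq\{2,3,5\}$ this is harmless.
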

\begin{proof}By Lemma \ref{ciclo}, there exists $\alpha_k$ such that if $X$ is a finite soluble group and $\gamma(L(X))\leq k,$ then $|\pi(X)|\leq \alpha_k.$ So may so assume that $G$ is not soluble. If follows from \cite[Hauptsatz 8.27]{hu} 
that there are two cases:

\noindent a) $\PSL(2,p)$ is a composition factor of $G.$ In this case $\gamma(L(X))\leq k$ for every subgroup $X$ of $\PSL(2,p).$ Since $\PSL(2,p)$ contains two soluble subgroups $H_1$ and $H_2$ of order, respectively, $p+1$ and $p(p-1)/2$ we deduce from Lemma \ref{ciclo} that $|\pi(G)|=|\pi(H_1)\cup \pi(H_2)|\leq 2\alpha_k.$

\noindent b) $\alt(5)$ is a composition factor of $G$. In this case $$\frac{G\cap \SL(2,p)}{G\cap Z(\SL(2,p))} \cong \alt(5),$$ 
so $G$ contains a normal subgroup $N$ such that $G/N$ is cyclic and $\pi(N)=\{2,3,5\}.$ It follows 
$|\pi(G)|\leq \alpha_k+3.$
\end{proof}

\begin{proof}[Proof of Theorem \ref{simple}]
	Fix $k\in \mathbb N$ and let $S$ be a finite non-abelian simple group with $\gamma(L(S))\leq k.$	Since $r_2(\alt(n))\geq \lfloor \frac n 2 \rfloor -1,$ it follows from Lemma \ref{rango}, that $\gamma(L(\alt(n))>k$ if $n$ is large enough. So we may assume that $S$ is of Lie type over the field $\mathbb F_q,$ $q=p^f.$ Since $r_p(S)$ tends to infinity as the Lie rank of $S$ tends to infinity, it follows from Lemma \ref{rango} that the Lie rank of $S$ is bounded in term of $k.$ On the other hand $S$ contains a section isomorphic to $\PSL(2,q)$, so,
	by Lemma \ref{pls}, $\gamma(L(S))\geq \gamma(L(\PSL(2,q))$ tends to infinity as $q$ tends to infinity. So we bounded in term of $k$ either  $q$ as the Lie rank, and consequently the number of possibilities for $S$ itself.
\end{proof}

\begin{proof}[Proof of Theorem \ref{main}]
By Lemma \ref{rango}, 
there exists $\rho_k$ such if $\gamma(L(G))\leq k,$ then $r_p(G)\leq \rho_k$ for every prime divisor $p$ of $|G|,$ and consequently $r(G)\leq \rho_k+1.$ So it suffices to prove that there  exists $\pi_k$ such if $\gamma(L(G))\leq k,$ then $|\pi(G)|\leq \pi_k.$ 
		Since $\pi(G)=\pi(G/\frat(G))$ (see \cite[Satz 3.8]{hu}) and $\gamma(L(G))\geq \gamma(L(G/\frat(G))$ we may assume $\frat(G)=1.$ 
		Write $$X=\soc(G)=A_1\times \dots \times A_r\times B_1\times \dots \times B_s\times C_1\times \dots \times C_t$$ as a direct product of minimal normal subgroups of $G$ with the property that the factors $A_i$'s are abelian with rank at most 2, the factors $B_j$'s are abelian with rank at least 3 and the factors $C_l$'s are non-abelian. By Theorem \ref{simple}, the family $\mathcal S_k$ of the finite non-abelian simple groups $S$ with $\gamma(L(S))\leq k$ is finite. Let $N$ be a minimal non-abelian normal subgroup of $G.$ There exists a non-abelian simple group $S$ and a positive integer $m$ such that $N\cong S^m.$ Since $\gamma(L(S))\leq \gamma(L(N))\leq \gamma(L(G))\leq k$ and $r_2(N)\geq 2\cdot m,$ if follows that $S\in \mathcal S_k$ and, by Lemma \ref{rango}, $m \leq \tau$ for a positive integer $\tau$ depending only on $k.$ Moreover by Lemma \ref{rango}, if $N\cong C_{p^u}$ is an abelian minimal normal subgroup of $G,$ then either $u\leq 2$ or $p^u\leq \sigma$ for a positive integer $\sigma$ depending only on $k.$ It follows that there exists a finite family $\mathcal F_k$ of finite characteristically simple groups such that $B_j, C_l\in \mathcal F_k$ for every $1\leq j\leq s$ and $1\leq l\leq t.$ Let $\Lambda_k$ be the set of the primes dividing $|Y||\Aut Y|$ for some $Y\in \mathcal F_k$ and set $\lambda_k=|\Lambda_k|.$ Since $\frat(G)=1,$ we have that $X$ coincides with the generalized Fitting subgroup of $G$ and consequently $C_G(X) = Z(X)$ and every prime dividing $|G/Z(X)|$ divides $|G/C_G(N)|$ for some minimal normal subgroup $N$ of $G.$ It follows   that a prime $p$ dividing $|G|$ either divides $|A_i||G/C_G(A_i)|$ for some $1\leq i \leq r,$ or belong to $\Lambda_k.$ Let $\Sigma=\cup_{1\leq i\leq r}\pi(A_i)$. 
		It follows from Lemma \ref{ciclo}, that $|\Sigma|\leq \alpha_k$ for an integer $\alpha_k$ depending only on $k.$ If we denote by $\Sigma_i$ the set of the prime divisors of $|G/C_G(A_i)|,$ we have
$\pi(G)\subseteq \Sigma \cup \Lambda_k \cup_{1\leq i \leq r}\Sigma_i.
$
	If $A_i$ is cyclic, then $G/C_G(A_i)$ is cyclic 
	and again it follows from Lemma \ref{ciclo} that $|\Sigma_i|\leq \alpha_k.$ Otherwise $|\Sigma_i|\leq \beta_k$ by Lemma \ref{beta}. We deduce  that $|\pi(G)|\leq \alpha_k+\lambda_k+\alpha_k \max(\alpha_k, \beta_k).$
	\end{proof}

\end{document}